\documentclass[12pt]{amsart} 
 \usepackage{amssymb, latexsym, amsmath}

 \usepackage{amssymb, latexsym, amsmath} 
 \begin{document} 
 \theoremstyle{plain} 
 \newtheorem{theorem}{Theorem}[section] 
 \newtheorem{lemma}[theorem]{Lemma} 
 \newtheorem{corollary}[theorem]{Corollary} 
 \newtheorem{proposition}[theorem]{Proposition}

\theoremstyle{definition} 
\newtheorem*{definition}{Definition}
\newtheorem{example}[theorem]{Example}
\newtheorem{remark}[theorem]{Remark}
\title[On the Galois correspondence]{On the 
Galois correspondence for Hopf Galois  structures}

\author{Lindsay N. Childs}
\address{Department of Mathematics and Statistics\\
University at Albany\\
Albany, NY 12222}
\email{lchilds@albany.edu}
\date{\today}
\newcommand{\Zpn}{\mathbb{Z}/p^n\mathbb{Z}}
\newcommand{\NN}{\mathbb{N}} 
\newcommand{\QQ}{\mathbb{Q}} 
\newcommand{\RR}{\mathbb{R}} 
\newcommand{\CC}{\mathbb{C}} 
\newcommand{\FF}{\mathbb{F}} 
\newcommand{\ZZ}{\mathbb{Z}}
\newcommand{\ZZm}{\mathbb{Z}/m\mathbb{Z}}
\newcommand{\ZZp}{\mathbb{Z}/p\mathbb{Z}}
\newcommand{\T}{\Theta} 
\newcommand{\af}{\alpha} 
\newcommand{\bt}{\beta} 
\newcommand{\ep}{\epsilon} 
\newcommand{\ee}{\end{eqnarray}} 
\newcommand{\no}{\noindent} 

\newcommand{\ben}{\begin{eqnarray*}} 
\newcommand{\een}{\end{eqnarray*}} 
\newcommand{\dis}{\displaystyle} 
\newcommand{\beal}{\[ \begin{aligned}} 
\newcommand{\eeal}{ \end{aligned} \]} 
\newcommand{\sg}{\sigma} 
\newcommand{\eps}{\varepsilon} 
\newcommand{\lb}{\lambda} 
\newcommand{\gm}{\gamma} 
\newcommand{\dt}{\delta} 
\newcommand{\zt}{\zeta} 
\newcommand{\Gm}{\Gamma} 
\newcommand{\bpm}{\begin{pmatrix}} 
\newcommand{\epm}{\end{pmatrix}} 
\newcommand{\Fp}{\mathbb{F}_p} 
\newcommand{\Fpx}{\mathbb{F}_p^{\times}} 
\newcommand{\olx}{\overline{x}} 
\newcommand{\oly}{\overline{y}} 
\newcommand{\ols}{\overline{s}} 
\newcommand{\olu}{\overline{u}} 
\newcommand{\olm}{\overline{\mu}} 
\newcommand{\olw}{\overline{w}}
\newcommand{\rbar}{\overline{r}} 
 \newcommand{\ole}{\overline{e}} 
 \newcommand{\olf}{\overline{f}} 
 \newcommand{\olg}{\overline{g}} 
 \newcommand {\olaf}{\overline{\alpha}}
\newcommand{\mcF}{\mathcal{F}} 
\newcommand{\mcp}{\mathcal{p}}
\newcommand{\mbF} {mathbf{F}}
\newcommand{\mcG}{\mathcal{G}}
\newcommand{\mcD}{\mathcal{D}}
\newcommand{\mcP}{\mathcal{P}}
\newcommand{\mcQ}{\mathcal{Q}}
\newcommand{\mcU}{\mathcal{U}}
\newcommand{\mcS}{\mathcal{S}}
\newcommand{\olz}{\overline{z}}
\newcommand{\olr}{\overline{r}}
\newcommand{\olt}{\overline{t}}
\newcommand{\olv}{\overline{v}}
\newcommand{\mcH}{\mathcal{H}} 
\newcommand{\hb}{\hat{b}} 
\newcommand{\ha}{\hat{a}}
\newcommand{\hc}{\hat{c}}
\newcommand{\z}{\hat{\zeta}} 
\newcommand{\noi}{\noindent} 
\newcommand{\hv}{\hat{v}} 
\newcommand{\hp}{\hat{p}} 
\newcommand{\hpi}{\hat{p}_{\psi_i}}
\newcommand{\hpw}{\hat{p}_{\psi_1}}
\newcommand{\hpe}{\hat{p}_{\psi_e}}
\newcommand{\hpd}{\hat{p}_{\psi_d}}
\newcommand{\NBP}{Norm_B(\mcP)}
\newcommand{\mcR}{\mathcal{R}}
\newcommand{\hpx}{\hat{p}_{\chi}}
\newcommand{\Af}{\text{Aff} }
\newcommand{\tb}{\textbullet \ }
\newcommand{\Aff}{\Af_n(\Fp) }
\newcommand{\mcO}{\mathcal{O}}
\newcommand{\GL}{\mathrm{GL}}
\newcommand{\mbx}{\mathbf{x}}
\newcommand{\mby}{\mathbf{y}}
\newcommand{\mfOK}{\mathfrak{O}_K}
\newcommand{\mfOL}{\mathfrak{O}_L}

\newcommand{\mfA}{\mathfrak{A}}

\newcommand{\M}{\mathrm{M}}
\newcommand{\Aut}{\mathrm{Aut}}
\newcommand{\End}{\mathrm{End}}
\newcommand{\Perm}{\mathrm{Perm}}
\newcommand{\Hol}{\mathrm{Hol}}
\newcommand{\Sta}{\mathrm{Sta}}
\newcommand{\GO}{\mathrm{GO}}
\newcommand{\PGL}{\mathrm{PGL}}
\newcommand{\diag}{\mathrm{diag}}
\newcommand{\Reg}{\mathrm{Reg}}

 \begin{abstract} We study the question of  the surjectivity  of the Galois correspondence from subHopf algebras to subfields given by the Fundamental Theorem of Galois Theory for abelian Hopf Galois structures on a Galois extension of fields with Galois group $\Gm$, a finite abelian $p$-group.  Applying the connection between regular subgroups of the holomorph of a finite abelian $p$-group $(G, +)$ and associative, commutative nilpotent algebra structures $A$ on $(G, +)$, we show that if $A$ gives rise to a $H$-Hopf Galois structure on $L/K$, then the $K$-subHopf algebras of $H$ correspond to the ideals of $A$.  As applications, we show that if $G$ and $\Gm$ are both elementary abelian $p$-groups, then the only Hopf Galois structure on $L/K$ of type $G$ for which the Galois correspondence is surjective is the classical Galois structure.  Also, if $\Gm$ is elementary abelian of order $p^n$ and $p > n$ there exists an $H$-Hopf Galois structure on $L/K$ for which there are exactly $n+1$ $K$-sub-Hopf algebras (but approximately $np^n$ intermediate subfields for $n$ large).  By contrast, if $\Gm$ is cyclic of order $p^n$, $p$ odd, then for every Hopf Galois structure on $L/K$, the Galois correspondence is surjective.    
\end{abstract}
\maketitle

\section{Introduction}
The Fundamental Theorem of Galois Theory (FTGT) of Chase and Sweedler [CS69] states that if $L/K$ is a $H$-Hopf Galois extension of fields for $H$ a  $K$-Hopf algebra $H$, then there is an injection $\mathcal{F}$ from the set of $K$-sub-Hopf algebras of $H$ to the  set of intermediate fields $K \subseteq E \subseteq L$ given by sending a $K$-subHopf algebra $J$ to $\mathcal{F}(J) = L^J$.  The \emph{strong form} of the FTGT holds if the injection is also a surjection.  For a classical Galois extension of fields with Galois group $\Gm$, the FTGT holds in its strong form.  It is known from [GP87] that if $L/K$ is a (classical) Galois extension with non-abelian Galois group $\Gm$, then there is a Hopf Galois structure on $L/K$ so that  $\mathcal{F}$ maps onto the subfields $E$ of $L$ that are normal over $K$.  So if $\Gm$ is not a Hamiltonian group [Ha59, 12.5], then $L/K$ has a Hopf Galois structure  for which the strong form of the FTGT does not hold.  In particular, the strong form fails extremely for the unique [By04] non-classical Hopf Galois structure on $L/K$ when $\Gm$ is a non-abelian simple group.  

Suppose $L/K$ is a classical Galois extension with Galois group $\Gm$.  If $L/K$ is also $H$-Hopf Galois for some $K$-Hopf algebra $H$, then $L\otimes_K K$ is $L\otimes_K H$-Hopf Galois over $L$, and by [GP87], $L\otimes_K H \cong LN$ for some regular subgroup $N$ of $\Perm(G)$ normalized by $\lb(\Gm)$, the image in $\Perm(\Gm)$ of the left regular representation of $\Gm$ on $\Gm$.  If $G$ is an abstract group and $\af: G \to \Perm(\Gm)$ is an injective homomorphism from $G$ to $\Perm(\Gm)$ whose image $\af(G) = N$, then we say that $H$ has \emph{type} $G$.   

 Implicit in [GP87], and made explicit by Crespo, Rio and Vela ([CRV16], Proposition 2.2), is that the $K$-subHopf algebras of $H$ correspond to the subgroups of $\af(G)$ that are normalized by $\lb(\Gm)$ in $\Perm(\Gm)$.  
 
Nearly all of the examples examining the success or failure of the strong form of the FTGT for a non-classical Hopf Galois structure on a classical Galois extension $L/K$ with Galois group $\Gm$  involve non-abelian groups.  Perhaps the only wholly abelian example of failure in the literature is in [CRV15], 3.1,  where $\Gm \cong C_2 \times C_2$ and $L/K$ has a Hopf Galois structure by $H$, a $K$-Hopf algebra which is a $K$-form of $LC_4$.  Then by classical Galois theory, there are three intermediate subfields between $K$ and $L$, but $LC_4$ has only one intermediate $L$-Hopf algebra, so $H$ can have at most one intermediate $K$-subHopf algebra.  Hence the strong form of the FTGT cannot hold for that Hopf Galois structure.  

Here we assume that $L/K$ is a Galois extension with Galois group an abelian $p$-group $\Gm$ of order $p^n$.  We look at Hopf Galois structures on $L/K$ by $K$-Hopf algebras $H$ of type $G$ where $G$ is abelian, also of order $p^n$.   There is a sequence of correspondences associated to $H$:  

\tb  the abelian $K$-Hopf algebra $H$ corresponds by base change and Galois descent (as in [GP87]) to  the $L$-Hopf algebra $L \otimes_KH$, which equals $LN$ for some regular abelian subgroup $N$ of $\Perm(\Gm)$ normalized by $\lb(\Gm)$; 

\tb  if $\af:  G \to \Perm(\Gm)$ is a regular embedding with image $N$, then $\af$ corresponds to a regular embedding $\bt:  \Gm \to \Hol(G)$, as shown in [By96];

\tb writing $G = (G, +)$,  the regular subgroup $\bt(\Gm)$ of $\Hol(G)$ corresponds to an associative, commutative nilpotent ring structure $A$ on $(G, +)$, as shown in [CDVS06] and [FCC12]. 

Our main result implies that under these correspondences, the $K$-sub-Hopf algebras of $H$  correspond to ideals of $A$.    As a consequence, we show that  for $\Gm \cong G$ an elementary  abelian $p$-group,  the only case where the FTGT holds in its strong form is if $H$ is the classical Galois structure on $L/K$.   

This paper and  [FCC12], [Ch15] and [Ch16]  demonstrate in different ways the usefulness of the correspondence of [CDVS06] in the Hopf Galois theory of Galois extensions of fields whose Galois group is a finite abelian $p$-group.  

\section{Translating to and from the holomorph}

Let $L/K$ be a Galois extension with Galois group $\Gm$ and let $G$ be a group of the same cardinality as $\Gm$.  The $H$-Hopf Galois structures on $L/K$ of type $G$ correspond to images of regular embeddings $\af:  G \to \Perm(\Gm)$ where $\af(G)$ is normalized by $\lb(G)$.   

As shown in [By96], a regular embedding $\af: G \to \Gm$ whose image $\af(G)$ is normalized by $\lb(\Gm)$ corresponds to a regular embedding $\bt:  \Gm \to \Hol(G)$, where 
\[\Hol(G) = \rho(G)\cdot \Aut(G) \subset \Perm(G)\]
 is the normalizer of $\lb(G)$ in $\Perm(G)$.   Here $\rho: G \to \Perm(G)$is the right regular representations of $G$ in $\Perm(G)$.   The relationship between $\af$ and $\bt$  is as follows:
 
 Let $\bt:  \Gm \to \Hol(G)$ be a regular embedding.  Define  $ b:  \Gm \to G $ by
\[ b(\gm) = \bt(\gm)(e_G) \]
for $\gm$ in $\Gm$, $g$ in $G$, where $e_G$ is the identity element of $G$.  Then
\[ \bt(\gm)(g) = (b(\lb(\gm))b^{-1})(g) = (C(b)\lb(\gm))(g) \]
Define $\af:  G \to \Perm(\Gm)$ by 
\[ \af(g)(\gm) = (b^{-1}(\lb(g))b)(\gm) = (C(b^{-1})\lb(g))(\gm).\]

The following formulas will be useful below.

\begin{proposition}\label{ba}   Suppose $\bt: \Gm \to \Hol(\lb(G))$ is an regular embedding,  and let  $\af = C(b^{-1})\lb_G:  G \to \Perm(\Gm)$  be the  regular embedding corresponding to $\bt$.    Then for all $\gm$ in $\Gm$ and $g$ in $G$, there is some $h$ in $G$ so that
\[ \bt(\gm)\lb(g) \bt(\gm)^{-1} = \lb(h) \]
and
\[ \lb(\gm) \af(g) \lb(\gm)^{-1} = \af(h) .\]
\end{proposition}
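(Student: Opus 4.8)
The plan is to observe that the two displayed identities are really the same statement, viewed on the two sides of the bijection $b$, and that this common statement is nothing more than the fact that $\bt(\gm)$ normalizes the left regular representation inside $\Perm(G)$.

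First I would dispose of the first identity directly. By hypothesis $\bt(\gm)$ lies in $\Hol(G)$, which by definition is the normalizer of $\lb(G)$ in $\Perm(G)$. Hence conjugation by $\bt(\gm)$ carries $\lb(G)$ into itself, so for each $g$ the permutation $\bt(\gm)\lb(g)\bt(\gm)^{-1}$ again lies in $\lb(G)$ and therefore equals $\lb(h)$ for a unique $h\in G$ depending on $\gm$ and $g$. This is the element $h$ that I then want to show works in the second identity as well.

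Next I would record how $b$ links the two pictures. Reading off the defining formulas, $\bt(\gm)=C(b)\lb(\gm)=b\,\lb(\gm)\,b^{-1}$ as a permutation of $G$ (here $\lb=\lb_\Gm$), while $\af(g)=C(b^{-1})\lb(g)=b^{-1}\lb(g)\,b$ as a permutation of $\Gm$ (here $\lb=\lb_G$); the latter is equivalent to $b\,\af(g)\,b^{-1}=\lb(g)$. Thus conjugation by the bijection $b$ is an isomorphism $C(b)\colon\Perm(\Gm)\to\Perm(G)$ that sends $\lb(\Gm)$ onto $\bt(\Gm)$ and $\af(G)$ onto $\lb(G)$. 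I would then simply apply this isomorphism to the target identity: conjugating $\lb(\gm)\af(g)\lb(\gm)^{-1}$ by $b$ turns it into $\bt(\gm)\lb(g)\bt(\gm)^{-1}$, and conjugating $\af(h)$ by $b$ turns it into $\lb(h)$. Since $C(b)$ is injective, $\lb(\gm)\af(g)\lb(\gm)^{-1}=\af(h)$ holds if and only if $\bt(\gm)\lb(g)\bt(\gm)^{-1}=\lb(h)$, which is exactly the first identity with the very same $h$.

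I do not expect a genuine obstacle here; the content is light once the translation through $b$ is set up. The only care required is bookkeeping: keeping straight which left regular representation ($\lb_G$ versus $\lb_\Gm$) and which permutation group ($\Perm(G)$ versus $\Perm(\Gm)$) each symbol inhabits, and verifying that the $h$ produced by the normalizer argument in $\Perm(G)$ is literally the $h$ transported back to $\Perm(\Gm)$, rather than merely some $h$ of the same form.
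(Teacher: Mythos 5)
Your proof is correct and matches the paper's argument: the first identity is immediate from $\bt(\Gm)\subseteq\Hol(G)=$ the normalizer of $\lb(G)$ in $\Perm(G)$, and the second is obtained from the first by transporting through conjugation by $b$ (the paper applies $C(b^{-1})$ to the first identity, you equivalently apply $C(b)$ to the second; it is the same computation). The care you take to note that the same $h$ appears on both sides is exactly the point, and your bookkeeping of $\lb_G$ versus $\lb_\Gm$ is consistent with the paper's conventions.
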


\begin{proof}  The first formula follows because $\bt$ maps $\Gm$ into $\Hol(G)$, the normalizer of $\lb(G)$ in $\Perm(G)$.
Since $C(b^{-1})(\bt)(\gm) = \lb(\gm)$ and $C(b^{-1})\lb(g) = \af(g)$, the second formula follows from 
 the first by applying $C(b^{-1}) $ to the first formula.  \end{proof}

\section{On the Galois correspondence for Hopf Galois structures}

Let $L/K$ be a Galois extension of fields with Galois group $\Gm$, and suppose $\af:G \to Perm(\Gm)$ is a regular embedding such that $\lb(\Gm)$ normalizes $\af(G)$.  Then by descent, $H = L[\af(G)]^G$ is a $K$-Hopf algebra and there is an action of $H$ on $L$ making $L/K$ into an $H$-Hopf Galois extension.  (See [GP87].)  The Fundamental Theorem of Galois Theory of Chase and Sweedler [CS69] gives an injection $\mathcal{F}$ from the set of $K$-subHopf algebras of $H$ to the set of intermediate fields $F$ with $K \subseteq F \subseteq L$ by 
\[ H' \mapsto L^{H'} = \{ x \in L | hx = \epsilon(h)x  \text{  for all }h \text{ in } H'\} .\]
From Proposition 2.2 of [CRV16], the $K$-subHopf algebras of $H$ correspond to $\lb(\Gm)$-invariant subgroups of $\af(G)$.  Thus to study the image of $\mathcal{F}$, we look at the $\lb(\Gm)$ invariant subgroups of $\af(G)$.

We do this for $G$ a finite abelian $p$-group by utilizing a characterization of regular subgroups of $\Hol(G)$ due to 
 Caranti, Della Volta and Sala [CDVS06] as extended in Proposition 2 of [FCC12]: 

\begin{proposition}  Let $(G, +)$ be a finite abelian $p$-group.  Then each regular subgroup of $\Hol(G)$ is isomorphic to the group $(G, \circ)$ induced from a structure $(G, + , \cdot)$ of a commutative, associative nilpotent ring (hereafter, ``nilpotent'') on $(G, +)$,  where the operation $\circ$ is defined by $g \circ h = g + h + g\cdot h$. \end{proposition}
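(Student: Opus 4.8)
The plan is to reconstruct the ring directly from the regular subgroup by transporting the group law of $N$ onto the underlying set $G$ and then extracting a multiplication. Let $N \subseteq \Hol(G)$ be a regular subgroup. Since $N$ acts simply transitively on $G$, the orbit map $\nu \colon G \to N$ defined by $\nu(g)(e_G) = g$ is a bijection, and transporting the product of $N$ through $\nu$ yields a group law $\circ$ on $G$ with $g \circ h = \nu(g)(h)$, so that $\nu$ is an isomorphism $(G,\circ) \to N$. Because $\Hol(G)$ is the group of affine transformations $\rho(G)\Aut(G)$, I would write each generator as $\nu(g)(x) = \phi_g(x) + g$ with $\phi_g \in \Aut(G)$; comparing $\nu(g)\nu(h)$ with $\nu(g\circ h)$ on linear and constant parts then shows that $g \mapsto \phi_g$ is a homomorphism from $(G,\circ)$ to $\Aut(G)$ and that $g \circ h = g + \phi_g(h)$.

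With this in hand I define the candidate multiplication by $g \cdot h = \phi_g(h) - h$, equivalently $L_g(h)$ where $L_g := \phi_g - \mathrm{id} \in \End(G)$; this immediately gives the asserted formula $g \circ h = g + h + g\cdot h$, and left distributivity $g\cdot(h+k) = g\cdot h + g\cdot k$ is automatic since $L_g$ is an endomorphism. The first substantive step is commutativity: the regular subgroups relevant here are abelian (commutativity of $\circ$ being equivalent to commutativity of $\cdot$), so writing out $g \circ h = h \circ g$ gives $\phi_g(h) - h = \phi_h(g) - g$, that is $g\cdot h = h\cdot g$. Commutativity together with left distributivity then yields right distributivity for free, so $\cdot$ is biadditive. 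For associativity I would feed the homomorphism relation $\phi_{g\circ h} = \phi_g\phi_h$ into $L_{g\circ h} = (\mathrm{id}+L_g)(\mathrm{id}+L_h) - \mathrm{id} = L_g + L_h + L_gL_h$; comparing with $L_{g\circ h} = L_{g+h+g\cdot h} = L_g + L_h + L_{g\cdot h}$ (using biadditivity) forces $L_{g\cdot h} = L_gL_h$, whence $(g\cdot h)\cdot k = L_gL_h(k) = g\cdot(h\cdot k)$.

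The step I expect to be the main obstacle is nilpotency, and it is exactly where passing from the vector-space setting of [CDVS06] to a general abelian $p$-group (as in [FCC12]) requires real work. Since $a_1\cdots a_k = L_{a_1}\cdots L_{a_{k-1}}(a_k)$, it suffices to show that the associative subalgebra of $\End(G)$ generated by $\{L_g\}$ is nilpotent. Here I would use that $N$, and hence its image $\{\phi_g\}$ under the linear-part homomorphism, is a finite $p$-group inside $\Aut(G)$; when $G$ is elementary abelian such a subgroup of $\GL_n(\Fp)$ is unipotent, so each $L_g = \phi_g - \mathrm{id}$ is nilpotent and the generated algebra is nilpotent. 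The genuine difficulty is that $(G,+)$ is in general only a $\ZZ/p^n$-module rather than an $\Fp$-vector space, so one must work in $\End_{\ZZ}(G)$ and control the $p$-power-torsion filtration of $G$; replacing the clean unitriangularization over a field by an argument that the augmentation-type ideal generated by the $L_g$ acts nilpotently on the module $G$ is the content of the extension in [FCC12]. Finally, I would note that the construction is reversible — a commutative nilpotent ring $(G,+,\cdot)$ produces the regular subgroup $\{x \mapsto x + g\cdot x + g\}$ — so the assignment is a genuine correspondence and every regular subgroup is accounted for.
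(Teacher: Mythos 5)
Your proposal supplies precisely the direction the paper itself does not prove: the Proposition is quoted from [CDVS06]/[FCC12], and the paper's accompanying text only verifies the easy direction (a nilpotent ring structure yields a regular subgroup via $\tau(g)(x) = g\circ x$) and then asserts that ``this process is reversible.'' The algebraic core of your argument for the hard direction is correct and is essentially the argument of [CDVS06]: regularity makes $\nu$ a bijection intertwining the group laws; the affine decomposition $\nu(g)(x)=\phi_g(x)+g$ gives $\phi_{g\circ h}=\phi_g\phi_h$; and your derivations of commutativity, of right distributivity from commutativity plus left distributivity, and of $L_{g\cdot h}=L_gL_h$ (hence associativity) are sound. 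Note, however, that your parenthetical restriction to abelian $N$ is not optional: the statement as literally written for ``each regular subgroup'' is false, since $g\circ h = g+h+g\cdot h$ with $\cdot$ commutative forces $(G,\circ)$ to be abelian, while $\Hol(G)$ has non-abelian regular subgroups in general (e.g.\ the circle group of the strictly upper triangular $3\times 3$ matrices over $\Fp$, a Heisenberg group, sits regularly in $\Hol(\Fp^3)$). The intended hypothesis --- the one in the title of [CDVS06], in Proposition 2 of [FCC12], and the only case used in this paper --- is that the regular subgroup is abelian, and that hypothesis is exactly what powers your commutativity step and, through it, your associativity computation.

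The one genuine gap is nilpotency for a general abelian $p$-group: you prove it only for $G$ elementary abelian and defer the rest to [FCC12]. Two comments. First, even in the elementary abelian case your wording needs care: you must invoke the Sylow-theoretic fact that the $p$-group $\{\phi_g\}$ is \emph{simultaneously} conjugate into the unitriangular group, so that all $L_g$ are simultaneously strictly upper triangular; individual nilpotence of each $L_g$ would not by itself make the algebra they generate nilpotent. Second, the general case does not require the torsion-filtration machinery you anticipate; there is a uniform two-line argument. Let $p^e$ be the exponent of $(G,+)$ and form the unitalization $A^1 = \ZZ/p^e\ZZ \oplus A$, a finite commutative unital ring containing $A$ as an ideal. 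Since $(A,\circ)$ is a group, every $a \in A$ is quasi-regular: there is $b$ with $a+b+ab=0$, i.e.\ $(1+a)(1+b)=1$ in $A^1$. Hence $1+xy$ is a unit for every $x \in A$ and $y \in A^1$ (as $xy \in A$), which says precisely that $A \subseteq J(A^1)$. But $A^1$ is finite, hence Artinian, so $J(A^1)$ is nilpotent, and therefore so is $A$. This ``finite radical ring is nilpotent'' observation is what [CDVS06]/[FCC12] rely on; it closes your gap for all finite abelian $p$-groups at once (the $p$-group hypothesis is irrelevant to this step) and would make your proof complete.
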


 The idea is the following: Let $(G, +)$ be an abelian  group of  order $p^n$ , and suppose that $A = (G, + , \cdot)$ is a nilpotent ring structure on $(G, +)$  yielding the operation $\circ$. Define $\tau: (G, \circ) \to \Hol(G, +)$ by $\tau(g)(x) = g \circ x$.  Then $\tau(g)(0) = g$, and 
\[ \tau(g)\tau(g')(x) = \tau(g)(g' \circ x) =g \circ (g '\circ x) = (g \circ g') \circ x = \tau(g \circ g')(x),\]
so $\tau$ is an isomorphism from $(G,\circ)$ into $\Hol(G, +)$ whose image $\tau(G, \circ) = T$ is a regular subgroup of $\Hol(G)$.  This process is reversible:  given a regular subgroup $T$ of $\Hol(G, +)$, there is a nilpotent ring structure $(G, +, \cdot)$ on $G$, which defines the $\circ$ operation as above and  yields a unique isomomorphism $\tau:  (G, \circ) \to T$ so that $\tau(g)(x) = g \circ x$. 

Now suppose $L/K$ be a Galois extension with Galois group $\Gm$, a finite abelian $p$-group of order $p^n$,  and  $\xi:\Gm \to (G, \circ)$ is some isomorphism.  

Given $A$ and $\tau$, let $\bt = \tau \xi:  \Gm \to T$, an embedding of $\Gm$ onto $T$ in $\Hol(G)$.  Then $\bt$ yields the map 
\[ b : \Gm \to G \]
by 
\[ b(\gm) = \bt(\gm)(0) = \tau(\xi(\gm))(0) = \xi(\gm) \circ 0 = \xi(\gm).\]
Thus $b$ is an isomorphism from $\Gm$ to $(G, \circ)$.  
Then $b$ defines an embedding $\af: (G, +) \to\Perm(\Gm)$, and we have the relations of Proposition \ref{ba}:  for all $\gm$ in $\Gm$, $g$ in $G$, there is some $h$ in $G$ so that
\[ \lb(\gm) \af(g) = \af(h) \lb(\gm)  \]
and
\[ \bt(\gm)\lb(g)= \lb(h) \bt(\gm).\]

\begin{theorem}\label{main} Suppose the nilpotent algebra $A = (G, +, \cdot)$ yields the regular embedding $\af:  (G, +) \to \Perm(\Gm)$ whose image is normalized by $\lb(\Gm)$.  Then the lattice (under inclusion) of $\lambda(\Gm)$-invariant  subgroups of $\af(G) $ is isomorphic to the lattice of ideals of $A$.   \end{theorem}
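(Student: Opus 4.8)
The plan is to translate $\lb(\Gm)$-invariance into a condition on additive subgroups of $(G,+)$ and then to identify that condition with the ideal condition. Since $\af$ is an injective homomorphism, the subgroups of $\af(G)$ are exactly the images $\af(S)$ of additive subgroups $S$ of $(G,+)$, and $S \mapsto \af(S)$ is an inclusion-preserving bijection onto the subgroups of $\af(G)$. Thus it suffices to show that $\af(S)$ is $\lb(\Gm)$-invariant if and only if $S$ is an ideal of $A$; the lattice isomorphism then follows formally, since an inclusion-preserving bijection of the ambient subgroup lattices that restricts to a bijection between the two distinguished sublattices is a lattice isomorphism.

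First I would make the conjugation action explicit. By Proposition \ref{ba}, for each $\gm$ in $\Gm$ conjugation by $\lb(\gm)$ carries $\af(G)$ to itself, hence induces an additive automorphism $c_\gm$ of $(G,+)$ characterized by $\af(c_\gm(g)) = \lb(\gm)\af(g)\lb(\gm)^{-1}$, and by the same proposition $c_\gm(g) = h$, where $h$ in $G$ is determined by $\bt(\gm)\lb(g)\bt(\gm)^{-1} = \lb(h)$. I can then compute $h$ inside $\Hol(G)$: writing $w = \xi(\gm)$ so that $\bt(\gm) = \tau(w)$ and $\tau(w)(x) = w \circ x$, I evaluate $\tau(w)\lb(g)\tau(w)^{-1}$ on an arbitrary $x$ and use the definition $a \circ b = a + b + ab$ together with the relation $w + w^{\circ -1} + w\, w^{\circ -1} = 0$ coming from $w \circ w^{\circ -1} = 0$. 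After cancellation the output should be $h + x$ with $h = g + w\cdot g$, so the induced automorphism is $c_\gm(g) = g + \xi(\gm)\cdot g$, i.e. multiplication by $1 + \xi(\gm)$ in $A$ with a formal unit adjoined; this map is bijective precisely because $A$ is nilpotent, so $1 + \xi(\gm)$ is invertible.

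With this formula in hand I would observe that $\af(S)$ is $\lb(\Gm)$-invariant exactly when $c_\gm(S) \subseteq S$ for every $\gm$, that is, when $s + \xi(\gm)\cdot s \in S$ for all $s$ in $S$ and all $\gm$. Since $\xi \colon \Gm \to (G,\circ)$ is a bijection, $w = \xi(\gm)$ ranges over all of $G$, and because $s$ already lies in $S$ the condition reduces to $w \cdot s \in S$ for all $w$ in $G$ and $s$ in $S$. By commutativity of $A$ this is precisely the statement that the additive subgroup $S$ is an ideal of $A$. Combining this equivalence with the inclusion-preserving bijection $S \mapsto \af(S)$ yields the asserted lattice isomorphism.

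I expect the only genuine obstacle to be the bookkeeping in the explicit computation of $h$: one must track the $\circ$-inverse $w^{\circ -1}$ and apply distributivity repeatedly so that, after invoking $w + w^{\circ -1} + w\,w^{\circ -1} = 0$, every term except $g + w\cdot g$ cancels against the contribution of $x$. Once the clean identity $c_\gm(g) = g + w\cdot g$ is established, the passage to the ideal condition and the verification that the correspondence respects inclusion are both immediate.
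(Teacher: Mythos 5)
Your proposal is correct and follows essentially the same route as the paper: both reduce $\lb(\Gm)$-invariance, via the subgroup bijection $S \mapsto \af(S)$ and Proposition \ref{ba}, to the identity $h = g + \xi(\gm)\cdot g$ and then to the ideal condition. The only difference is cosmetic: the paper writes the conjugation relation as $\bt(\gm)\lb(g) = \lb(h)\bt(\gm)$ and expands $k \circ (g+x) = h + k \circ x$, thereby avoiding the $\circ$-inverse bookkeeping that your direct computation of $\tau(w)\lb(g)\tau(w)^{-1}$ requires.
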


\begin{proof}  First, $\af: G \to Perm(\Gm)$ is an injective homomorphism from $(G, +)$ to $\Perm(\Gm)$.  So if $J$ is an additive subgroup of $G$, then $\af(J)$ is a subgroup of $\af(G) \subset \Perm(\Gm)$.  Conversely, if $J$ is a subset of $G$ and $\af(J)$ is a subgroup of $\af(G)$, then for $s, t$ in $J$, $\af(s + t) = \af(s)\af(t)$ is in $\af(J)$, so $J$ is an additive subgroup of $G$.   So there is a bijection between subgroups of $(G, +)$ and subgroups of $\af(G)$.  Clearly $J_1 \subseteq J_2$ iff $\af(J_1) \subseteq \af(J_2)$, so the bijection is lattice-preserving.

Suppose the image $\af(G)$ of $\af$ is normalized by $\lb(\Gm)$, so for all $\gm$ in $\Gm$, $g$ in $G$, there is some $h$ in $G$ so that
\[ \lb(\gm)\af(g)\lb(\gm)^{-1} = \af(h).\]
This equation holds iff
\[ \bt(\gm)\lb_G (g) = \lb_G(h) \bt(\gm).\]
Recalling that $A = (G, +, \cdot) = (G, \circ)$, factor $\bt = \tau\xi$ where $\xi:  \Gm \to A = (G, \circ)$ is an isomorphism and $\tau:  A = (G, \circ) \to Hol(G)$ an embedding. 
Let $\xi(\gm) = k$ in $A$.  Then the last equation is
\[\tau(k)\lb_G(g) = \lb_G(h)\tau(k),\]
and applying this to $x$ in $G$ gives
\[ \tau(k)(g + x) = h + \tau(k)(x) ,\]
\[ k \circ (g + x) = h + k \circ x .\]
Viewing this equation in $A$ where $a \circ b = a + b + a\cdot b$, we have
\[k + (g + x) + k\cdot g + k \cdot x = h + k + x + k\cdot x.\]
 This last equation reduces to 
\[ h = g + k\cdot g.\]

Now suppose $J$ is an ideal of $A$ and $g$ is in $J$.  Then $k\cdot g$ is in $J$, so $h$ is in $J$, and so $\lb(\gm)$  conjugates $\af(g)$ in $\af(J)$ to an element of $\af(J)$.  So $\af(J)$ is normalized by $\lb(\Gm)$ in $\Perm(\Gm)$.  

Conversely, suppose $J$ is an additive subgroup of $(G, +, \cdot) = A$ and $\af(J)$ is normalized by $\lb(\Gm)$.  Then for all $\gm$ in $G$, $g$ in $J$, 
\[ \lb(\gm)\af(g)\lb(\gm)^{-1} = \af(h)\]
and $\af(h)$ is in $\af(J)$.  So $h$ is in $J$.  Hence for all $k = \xi(\gm)$ in $G$, and $g$ in $J$, $h = g + k\cdot g$ is in $J$.  Now $J$ is an additive subgroup of $A$, so $k\cdot g$ is in $J$ for all $k$ in $G$, $g$ in $J$.  Thus $J$ is an ideal of $A$.
\end{proof} 

\section{Examples}

Theorem \ref{main} transforms the problem of describing the image of the Galois correspondence map $\mathcal{F}$ on a $H$-Hopf Galois structure on $L/K$ to the study of the ideals of the nilpotent algebra associated to $H$.   In this section we look at some examples.  

\begin{theorem}  Let $L/K$ be a Galois extension of fields with  Galois group $\Gm$ an elementary  abelian $p$-group of order $p^n$.   Suppose $L/K$ has a Hopf Galois structure by an abelian Hopf algebra $H$ of type $G$ where $G$ is an elementary abelian $p$-group. Let $A$ be a nilpotent ring structure yielding the regular subgroup $T \cong (G, \circ) \subset \Hol(G)$  corresponding to $H$, where $(G, \circ) \cong \Gm$.  Then the $H$-Hopf Galois structure on $L/K$ satisfies the strong form of the FTGT if and only if $A$ is the trivial nilpotent algebra satisfying $A^2 = 0$, if  and only if $T = \rho(G)$, iff  $H$ is the classical Galois structure by $K\Gm)$ on $L/K$.  \end{theorem}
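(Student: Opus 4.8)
The plan is to count and compare the two sides of the Galois correspondence. Since $\mcF$ is an injection from the set of $K$-subHopf algebras of $H$ into the set of intermediate fields, and both sets are finite, the strong form of the FTGT holds if and only if these two sets have the same cardinality. By classical Galois theory the intermediate fields of $L/K$ are in bijection with the subgroups of $\Gm$, and since $\Gm$ is elementary abelian of order $p^n$ these are exactly the $\Fp$-subspaces of the $n$-dimensional space $(G,+)$; their number is $\sum_{k=0}^n \binom{n}{k}_p$. On the other side, by Proposition 2.2 of [CRV16] together with Theorem \ref{main}, the $K$-subHopf algebras of $H$ correspond to the ideals of $A$. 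So the task reduces to showing that $A$ has as many ideals as $(G,+)$ has subspaces precisely when $A^2 = 0$.

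First I would record that because $(G,+)$ is elementary abelian, it is an $\Fp$-vector space and the ring multiplication of $A$ is $\Fp$-bilinear, so $A$ is a commutative, associative, nilpotent $\Fp$-algebra. Consequently every additive subgroup of $A$ is an $\Fp$-subspace, and every ideal of $A$ is in particular a subspace. Hence the number of ideals is at most the number of subspaces, with equality if and only if \emph{every} subspace of $A$ is an ideal.

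The key step is the equivalence: every $\Fp$-subspace of $A$ is an ideal if and only if $A^2 = 0$. The implication $A^2 = 0 \Rightarrow$ (all subspaces are ideals) is immediate, since then $a\cdot g \in A^2 = 0 \subseteq J$ for any subspace $J$. For the converse it suffices to test on one-dimensional subspaces: if the line $\Fp b$ is an ideal for every $b$ in $A$, then for each $a$ in $A$ the multiplication operator $L_a: x \mapsto a\cdot x$ maps every line to itself, so every nonzero $b$ is an eigenvector of $L_a$. But $A$ is nilpotent, so $a$ is nilpotent and hence $L_a$ is a nilpotent linear operator, whose only eigenvalue is $0$; therefore $a\cdot b = 0$ for all $a,b$, i.e.\ $A^2 = 0$. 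This nilpotency argument is the real content of the proof and the step I expect to be the main obstacle to get exactly right, since it is what forces the product to vanish rather than merely to preserve each line.

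Finally I would translate $A^2=0$ back through the chain of correspondences. If $A^2 = 0$ then $g\circ h = g + h + g\cdot h = g + h$, so the operation $\circ$ coincides with $+$; the embedding $\tau(g)(x) = g\circ x = g + x$ then has image $T = \rho(G)$ (using $\rho = \lb$ for abelian $G$), the regular subgroup attached to the classical Galois structure $H = K\Gm$. Conversely $T = \rho(G)$ forces $\circ = +$ and hence $A^2 = 0$. Assembling the equivalences, the strong form of the FTGT holds exactly when $A^2 = 0$, equivalently when $T = \rho(G)$, equivalently when $H$ is the classical structure, as claimed.
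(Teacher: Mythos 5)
Your proposal is correct and follows essentially the same route as the paper: both reduce, via Theorem \ref{main}, to comparing the ideals of $A$ with the $\Fp$-subspaces of $(G,+)$, and both hinge on showing that if every line $\Fp b$ is an ideal then $A^2=0$. Your eigenvalue phrasing (a nilpotent operator $L_a$ has only the eigenvalue $0$) is just a repackaging of the paper's explicit iteration $a=(sb)a=(sb)^2a=\cdots=(sb)^{n+1}a=0$, so the two arguments coincide in substance.
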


\begin{proof}  If $A^2 = 0$, then $(G, \circ) = (G, +)$, so the regular subgroup $T$ acts on $G$ by $\tau(g)(h) = g \circ h = g + h$, hence $T = \lb(G)$. Since $G$ is abelian, the corresponding Hopf Galois structure on $L/K$ is the classical structure by the $K$-Hopf algebra $K[\Gm]$.  So the Galois correspondence holds in its strong form.  

 For the converse, view $(G, +)$ as an $n$-dimensional $\Fp$-vector space.  Suppose $A^2 \ne 0$.    Then for some $a, b$ in $A$,  $ab \ne 0$.  Then the subspace $\Fp a$ does not contain $ab$.  For if $ab = ra$ for $r \ne  0$ in $\Fp$, then $a = sba$ for $s \ne 0$ in $\Fp$.  Then
 \[ a = (sb)a = (sb)^2a = \ldots = (sb)^{n+1}a = 0 \]
 since $A$ is nilpotent of  dimension $n$,  hence $(sb)^{n+1}= 0$.  Thus the subspace $\Fp a$  is not an  ideal of $A$.  

The  subgroup $\af(\Fp a)$ of $\af(G)$ is then not normalized by $\lb(\Gm)$.  But $\Gm \cong G$, so there are bijections between  subgroups of $\af(G)$, subgroups of $G$,  subgroups of $\Gm$ and (by classical Galois theory) subfields of $L$ containing $K$.  If some subgroup of $\af(G)$ is not normalized by $\lb(\Gm)$, then the number of $K$- subHopf algebras of $H =L[\af(G)]^G$ is strictly smaller than the number of subfields between $K$ and $L$.  So the Galois correspondence for the $H$-Hopf Galois structure on $L/K$ does not hold in its strong form.
\end{proof}

By choosing a particular nilpotent algebra structure on $(\Fp^n, +)$ we can see how badly the Galois correspondence can fail to be surjective. 

Let  $A$ be the primitive $n$-dimensional nilpotent $\Fp$-algebra generated by $z$ with $z^{n+1} = 0$.   Then $(A, +) \cong (\Fp^n, +)$ and so the multiplication on $A$ yields a nilpotent $\Fp$-algebra structure on $(G, +) = (\Fp^n, +)$.  Let $G = (\Fp^n, \circ)$ where the operation $\circ$ is defined using the multiplication on $A$ by $a \circ b = a + b + a\cdot b$.  

\begin{theorem} \label{prim} 
Let $G$ be an elementary  abelian $p$-group of order $p^n$.   Let $A$ be a primitive $\Fp$-algebra structure $A$ on $G$, and let $(G, \circ)$ be the corresponding group structure on $\Fp^n$.  Suppose $L/K$ is a Galois extension of fields with  Galois group $\Gm \cong (G, \circ)$.  Then the primitive nilpotent $\Fp$-algebra $A$ corresponds to an $H$-Hopf Galois structure on $L/K$  for some $K$-Hopf algebra $H$, and the  $K$-subHopf algebras of $H$ form a descending chain
 \[ H = H_1 \supset H_2 \supset \ldots \supset H_n \supset K .\]
 Hence the Galois correspondence $\mathcal{F}$ for $H$ maps onto exactly $n+1$ fields $F$ with $K \subseteq F \subseteq L$.  
\end{theorem}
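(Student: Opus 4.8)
The plan is to reduce the entire statement to a computation of the ideal lattice of $A$, and then read off the conclusion through the machinery already assembled. By Theorem~\ref{main}, the lattice of $\lb(\Gm)$-invariant subgroups of $\af(G)$ is isomorphic to the lattice of ideals of $A$; combined with Proposition~2.2 of [CRV16], which identifies the $K$-subHopf algebras of $H = L[\af(G)]^G$ with the $\lb(\Gm)$-invariant subgroups of $\af(G)$, this means the $K$-subHopf algebras of $H$ are in lattice-preserving bijection with the ideals of $A$. So once I know that the ideals of $A$ form a chain of length $n+1$, the subHopf algebras form the asserted chain, with $H$ itself corresponding to the unit ideal $A$ and $K$ corresponding to the zero ideal.

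The heart of the argument is therefore to classify the ideals of the primitive nilpotent algebra $A$. First I would fix the $\Fp$-basis $z, z^2, \ldots, z^n$ of $(A,+)$ coming from $A = (z)/(z^{n+1})$, and record the key observation that, since $(A,+) \cong (\Fp^n,+)$ is elementary abelian, every additive subgroup is automatically an $\Fp$-subspace; hence an ideal is precisely a subspace $J$ with $A\cdot J \subseteq J$. For $i = 1, \ldots, n$ set $A_i = \langle z^i, z^{i+1}, \ldots, z^n\rangle$, and $A_{n+1} = 0$; each $A_i$ is visibly an ideal. To show these are the only ideals, let $I \neq 0$ be an ideal and let $i$ be the least exponent occurring with nonzero coefficient among elements of $I$, so that $I \subseteq A_i$ by minimality. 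Choosing $a = c_i z^i + c_{i+1}z^{i+1} + \cdots + c_n z^n \in I$ with $c_i \neq 0$, I would prove $z^j \in I$ for $j = n, n-1, \ldots, i$ by downward induction: multiplying $a$ by $z^{\,j-i}$ kills every term of exponent exceeding $n$ and leaves $c_i z^j$ plus a combination of $z^{j+1}, \ldots, z^n$, all of which lie in $I$ by the inductive hypothesis, so $c_i z^j \in I$ and hence $z^j \in I$. This yields $A_i \subseteq I$, and therefore $I = A_i$.

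The main obstacle is exactly this peeling-off of the leading term: because $A$ is nonunital one cannot simply invert a ``$1 + (\text{higher order})$'' factor, so the downward induction on the exponent, using that $z^{n+1} = 0$ truncates the higher terms, is the device that makes the leading coefficient recoverable. Once the chain of ideals $A = A_1 \supset A_2 \supset \cdots \supset A_n \supset A_{n+1} = 0$ is established, the bijection above produces the descending chain of $K$-subHopf algebras $H = H_1 \supset H_2 \supset \cdots \supset H_n \supset K$, a total of $n+1$ subHopf algebras. Finally, since the Chase--Sweedler map $\mathcal{F}$ is injective (the FTGT), it carries these $n+1$ distinct subHopf algebras to $n+1$ distinct intermediate fields, so the image of $\mathcal{F}$ consists of exactly $n+1$ fields $F$ with $K \subseteq F \subseteq L$.
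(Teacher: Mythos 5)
Your proposal is correct and takes essentially the same approach as the paper: both reduce, via Theorem~\ref{main} and Proposition~2.2 of [CRV16], to showing that the only ideals of the primitive algebra $A$ are the chain $\langle z^i\rangle$, $i = 1,\ldots,n$, each proof starting from an element of minimal degree in a given ideal and recovering its leading power of $z$. The only (cosmetic) difference is in that extraction step --- the paper cancels the tail of the minimal-degree element by an iterated multiply-and-add trick, while you peel off each $z^j$ by downward induction on $j$; just note that at the final step $j=i$ you cannot literally ``multiply by $z^{0}$'' in the nonunital algebra $A$, but none is needed, since $a$ itself together with $z^{i+1},\ldots,z^n \in I$ already gives $c_i z^i \in I$.
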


\begin{proof}  
Given Theorem \ref{main}, we just need to show that ideals of $A$ are $J_i = \langle z^i \rangle$ for $i = 1, \ldots, n$.  

Suppose $J$ is an ideal of $A$ and has an element  $r(z^k + z^{k+r}b)$ of minimal degree $k$, where $r \ne 0$ in $\Fp$, $b$ in $A$. Then $J$ also contains
\[ z^k + z^{k+r}b\] 
and
\[ (z^k + z^{k+r}b) (-z^rb)  = -z^{k+r}b -z^{k + 2r}b^2,\]
hence their sum,
\[z^k - z^{k+2r}b^2  = z^k + z^{k + r'}b'\]
where $r' > r$.  Repeating this argument until $r' > n$ shows that $J$ contains $z^k$, hence $J \supseteq J_k =\langle z^k \rangle$.  
Since $J_k = \langle z^k \rangle$ contains every element of degree $\ge k$, $J = J_k$.  
Thus $A$ has exactly $n+1$ ideals.  Since the correspondence between ideals of $A$ and $\lb(\Gm)$ invariant subgroups of $\af(G)$ is lattice-preserving, 
 we have a single filtration
\[ \af(G) = \af(J_1) \supset \af(J_2) \supset  \ldots  \supset   \af(J_n)  \supset  {0} .\]
of $\lb(G)$-invariant subgroups of $\af(G)$.  If $H$ is the corresponding $K$-Hopf algebra making $L/K$ into a Hopf Galois extension, then $H$ has a unique filtration of $K$-sub-Hopf algebras, 
\[ H = H_1 \supset H_2 \supset  \ldots \supset H_n \supset K .\]
\end{proof}
 
For $A$ a primitive nilpotent $\Fp$-algebra with $A^{n+1} = 0$, the corresponding group $(G, \circ)$ is isomorphic (by $a \mapsto 1 + a$) to the group of principal units of the truncated polynomial ring $\Fp[x]/(x^{n+1}\Fp[x]$.  The structure of that group is described in Corollary 3 of [Ch07].  In particular $(G, \circ)$, hence $\Gm$, is an elementary abelian $p$-group if and only if $p > n$.

In  Theorem \ref{prim}, when  $p > n$, then  $L/K$ is classically Galois with Galois group $\Gm \cong  (\Fp^n, +)$.  So the number of subgroups of $\Gm$, and hence the the number of subfields $E$ with $K \subseteq E \subseteq L$, is equal to the number of subspaces of $\Fp^n$, namely
\[ \sum_{r=1}^n \frac{ (p^n-1)(p^n - p) \cdots (p^n-p^{r-1})}{(p^r-1)(p^r-p) \cdots (p^r - p^{r-1})} \sim np^n\]
for $n$ large.   So the Galois correspondence map $\mathcal{F}$  is extremely far from being surjective for a Hopf Galois structure corresponding to a nilpotent algebra structure $A$ with $\dim(A/A^2) = 1$.

By contrast, 

\begin{proposition} Let $L/K$ be a Galois extension of fields with Galois group $\Gm$ cyclic of order $p^n$, $p$ odd.  Let the $K$-Hopf algebra $H$ give a Hopf Galois structure on $L/K$.  Then $H$  has type $G$ where $G \cong \Gm$, and the Galois correspondence for that Hopf Galois structure holds in its strong form.  \end{proposition}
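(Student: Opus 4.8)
The plan is to pass to the nilpotent algebra $A = (G,+,\cdot)$ attached to $H$ and to exploit that, by Theorem~\ref{main}, the $K$-sub-Hopf algebras of $H$ are in lattice-preserving bijection with the ideals of $A$, while by classical Galois theory the intermediate fields of $L/K$ are in bijection with the subgroups of $\Gm$. Since $\mathcal{F}$ is always injective, the strong form of the FTGT holds precisely when $A$ has as many ideals as $\Gm$ has subgroups. A cyclic group of order $p^n$ has exactly $n+1$ subgroups, so the whole proposition reduces to two assertions: that the type $(G,+)$ is cyclic (so that $G \cong \Gm$), and that $A$ then has exactly $n+1$ ideals.

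For the first assertion, recall that $\Gm \cong (G,\circ) \cong 1+A$, the group of principal units obtained by adjoining an identity to $A$. I would first observe that $1+a \mapsto a + A^2$ induces an isomorphism $(1+A)/(1+A^2) \cong A/A^2$ of abelian groups, so that $A/A^2$ is a quotient of the cyclic group $1+A$ and hence is cyclic; by Nakayama's lemma $A$ is therefore generated by a single element $z$, i.e. $A$ is monogenic. The remaining step — that a monogenic nilpotent algebra whose unit group $1+A$ is cyclic of order $p^n$ must itself have cyclic additive group when $p$ is odd — is exactly the statement that every Hopf Galois structure on a cyclic $p$-power extension has cyclic type, due to Kohl [Ko98]. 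I expect this to be the main obstacle: it is genuinely where the hypothesis ``$p$ odd'' is used, since for $p=2$ and $n \ge 3$ there exist Hopf Galois structures of non-cyclic type, for which the strong form can fail.

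Once $(G,+) \cong \Zpn$ is established, the ideal count is routine and requires nothing about the multiplication of $A$. The additive subgroups of $\Zpn$ form a single chain $0 = p^nG \subset p^{n-1}G \subset \cdots \subset pG \subset G$ of $n+1$ terms, and each $p^iG$ is automatically an ideal: for $a \in A$ one has $a\cdot(p^i g) = p^i(a\cdot g) \in p^iG$, since $a\cdot g \in G$. Hence $A$ has exactly $n+1$ ideals. By Theorem~\ref{main} the $H$-Hopf Galois structure therefore has exactly $n+1$ $K$-sub-Hopf algebras, matching the $n+1$ intermediate fields of $L/K$; as $\mathcal{F}$ is an injection between finite sets of equal cardinality it is a bijection, and the strong form of the FTGT holds.
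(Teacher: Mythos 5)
Your proof is correct, but its core step runs along a genuinely different and more economical line than the paper's. Both proofs begin identically, citing [Ko98] to conclude that the type $G$ must be cyclic, so $G \cong \Gm$. From there the paper identifies $G$ with $(\Zpn, +)$ and invokes Proposition 2 of [Ch11] to enumerate explicitly all $p^{n-1}$ regular embeddings $\bt\colon \Gm \to \Hol(G)$; it then computes the multiplication of the resulting nilpotent algebra, namely $r\cdot s = rspd$ for a parameter $d$ modulo $p^{n-1}$, and verifies by hand that the ideals of this particular algebra $A_d$ are the principal ideals generated by the powers of $p$. You bypass that classification entirely: for an \emph{arbitrary} nilpotent ring structure $A$ on $(\Zpn,+)$, distributivity gives $a\cdot(p^i g) = p^i(a\cdot g) \in p^iG$, so each of the $n+1$ additive subgroups $p^iG$ is automatically an ideal, and since every ideal is in particular an additive subgroup, these are all of them. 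Your route is therefore more elementary (no appeal to [Ch11]) and slightly more general: it shows that every additive subgroup of any commutative nilpotent ring on a cyclic $p$-group is an ideal, even for $p=2$, which makes transparent that the hypothesis ``$p$ odd'' enters only through Kohl's theorem on the type. What the paper's computation buys in exchange is an explicit inventory of the Hopf Galois structures (the algebras $A_d$), which your argument never exhibits. Two minor points that do not affect correctness: the Nakayama/monogenic digression in your first part is superfluous (you cite [Ko98] for cyclicity of $(G,+)$ anyway) and is mildly circular as phrased, since forming $A$ at all presupposes the type is abelian; and your final count implicitly composes the bijection of Theorem~\ref{main} with that of Proposition 2.2 of [CRV16] between $K$-subHopf algebras and $\lambda(\Gm)$-invariant subgroups --- but this is exactly how the paper itself uses Theorem~\ref{main}.
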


\begin{proof} From [Ko98] it is known that if $\Gm$ is cyclic of order $p^n$ then every Hopf Galois structure must have type $G \cong \Gm$.  So let $G$ be cyclic of order $p^n$, which we identify with $(\ZZ/p^n\ZZ, +)$.  Then we  view $\Hol(G) = G \rtimes \Aut(G)$ as the set of pairs $(a, g)$ where $a$ and $g$ are modulo $p^n$ and $(g, p) = 1$, or equivalently as the set of matrices
\[ \bpm  g & a\\0&1 \epm \]
in $\Af_1(\ZZ/p^n\ZZ)  \subset \GL_2(\ZZ/p^n\ZZ)$, acting on $s$ in $G$ by 
\[ \bpm g&a\\0&1 \epm \bpm s\\1 \epm = \bpm gs + a \\1\epm.\]
View $\Gm$ as the free $\ZZ/p^n\ZZ$-module with basis $y$.   From Proposition 2 of [Ch11],  the $p^{n-1}$ regular embeddings $\bt:  \Gm = (\ZZ/p^n\ZZ) y \to Hol(G)$ are determined by $\bt(y)$ where
 \[ \bt(y) = \bpm 1 + pd & -1\\0&1 \epm \]
for some $d$ modulo $p^{n-1}$.  So in the notation of the last section,  
\[ \tau(-1) = \bpm(1 + pd & -1\\0&1 \epm\]
and acts on $s$ in $G$ as above.  That action defines the operation $\circ$ on $G$ by 
\[ (-1) \circ s = (1 + pd)s - 1 = -1 +s + pds. \]
The multiplication on $(G, + )$ to make $(G, + , \cdot) = A$ a nilpotent algebra is then defined by 
\[(-1) \cdot s = (-1) \circ s -((-1) + s) = (-1 + s +  pds) +1 -s =  pds.\]
By distributivity, for every $r, s$ in $\ZZ/p^n\ZZ$,
\[ -r \cdot s = rspd.\]
Replacing $d$ by $-d$, let $A_d$ be the commutative nilpotent algebra structure on $(\ZZ/p^n\ZZ, +)$ with multiplication 
\[ r \cdot s = rspd \] 
for all $r, s$ in $\ZZ/p^n\ZZ$.   It is then easy to check that the ideals of $A_d$ are the  principal ideals generated by $p^r$, for $r = 0, \ldots, n$.  Since those are also the additive subgroups of $(A_d, +) =(\ZZ/p^n\ZZ, +)$, it follows by Theorem \ref{main} that for every Hopf Galois structure on $L/K$, the Galois correspondence holds in its strong form.
\end{proof}

\end{document}